
\documentclass[letterpaper, 10 pt, conference]{ieeeconf}  

\IEEEoverridecommandlockouts                              

\overrideIEEEmargins                                      


\usepackage{graphics} 
\usepackage{epsfig} 
\usepackage{mathptmx} 
\usepackage{times} 
\usepackage{amsmath} 
\usepackage{amssymb}  
\usepackage{subfigure}
\usepackage{color}
\usepackage{flushend}
\usepackage{eurosym}
\usepackage{amsmath,theorem}
\usepackage{amssymb}
\usepackage{amsfonts}
\usepackage{graphics}
\usepackage{psfrag}
\usepackage{array}
\usepackage{shortvrb}
\usepackage{epsf}
\usepackage{graphicx}
\usepackage{rotating}
\usepackage{cite}
\usepackage{wasysym}
\usepackage[usenames]{xcolor}

\usepackage{graphicx}
\usepackage{eurosym}
\usepackage{amssymb}
\usepackage{amsmath,mathtools}
\usepackage{amsfonts}
\usepackage{epstopdf}
\usepackage{epsf,subfigure}
\usepackage{psfrag}
\usepackage{graphics}
\usepackage{color} 

\hyphenation{op-tical net-works semi-conduc-tor}



\newtheorem{definition}{Definition}
\newtheorem{theorem}{Theorem}

\newtheorem{lemma}{Lemma}

\newcommand{\qed}{\nobreak \ifvmode \relax \else
      \ifdim\lastskip<1.5em \hskip-\lastskip
      \hskip1.5em plus0em minus0.5em \fi \nobreak
      \vrule height0.75em width0.5em depth0.25em\fi}

\newcommand{\rem}[1]{}

\title{\LARGE \bf
Stability and Control of Power Systems using Vector Lyapunov Functions and Sum-of-Squares Methods*
}

\author{Soumya Kundu$^{1}$ and Marian Anghel$^{2}$
\thanks{*This work was supported by the U.S. Department of Energy
through the LANL/LDRD Program.}
\thanks{$^{1}$Soumya Kundu is with the Center for Nonlinear Studies and Information Sciences Group (CCS-3), Los Alamos National Laboratory, Los Alamos, USA
        {\tt\small soumya@lanl.gov}}%
\thanks{$^{2}$Marian Anghel is with the Information Sciences Group (CCS-3), Los Alamos National Laboratory, Los Alamos, USA
        {\tt\small manghel@lanl.gov}}%
}

\begin{document}

\maketitle
\thispagestyle{empty}
\pagestyle{empty}

\begin{abstract}

 Recently{,} sum-of-squares (SOS) based methods have been used for the stability analysis and control synthesis of polynomial dynamical systems. 
 This analysis framework was also extended to non-polynomial dynamical systems, including power systems, using an algebraic 
 reformulation technique that recasts the system's dynamics into a set of polynomial differential algebraic equations. 
 Nevertheless, for  large scale dynamical systems this method becomes inapplicable due to its computational complexity.  
 For this reason we develop a subsystem based stability analysis approach using vector Lyapunov functions and introduce
  a parallel and scalable algorithm to infer the stability of the interconnected system with the help of the subsystem Lyapunov functions. 
  Furthermore, we design adaptive and  distributed  control laws that guarantee  asymptotic stability under a given external disturbance. 
  Finally,  we apply  this algorithm  for the stability analysis and control synthesis of a network preserving power system.

\end{abstract}

\section{INTRODUCTION}


Recently, a methodology for the algorithmic construction
of Lyapunov functions for the transient stability analysis
of classical power system models was introduced \cite{Anghel:2013}. 
The proposed methodology
uses advances in the theory of positive polynomials,
semidefinite programming, and sum of squares decomposition,
that have provided powerful nonlinear tools for the analysis of
systems with polynomial vector fields \cite{Parrilo:2000, Antonis:2002,Antonis:2005, 
Antonis:2005b, Wloszek:2005, Wloszek:2003}. In order to apply these
techniques to power systems described by trigonometric
nonlinearities  an algebraic reformulation technique to
recast the system's dynamics into a set of polynomial differential
algebraic equations was used \cite{Antonis:2005, Anghel:2013}.
However, the sum-of-squares (SOS) approach does not scale well and, consequently, SOS methods only work
 for small systems with only a few state variables. For a large scale power system, it is then important to devise a subsystem based stability analysis approach using the concept of vector Lyapunov functions and the decomposition-aggregation method 
 \cite{Siljak:1979,Weissenberger:1973}. 
 \rem{Generally from the subsystem Lyapunov functions, a global Lyapunov function (or equivalent) candidate is constructed whose properties then determine the stability of the complete system. }
%

Formulations using vector Lyapunov functions 
\cite{Bellman:1962,Bailey:1966} are computationally attractive because of their parallel structure and scalability. In \cite{Weissenberger:1973}, it was shown that if the subsystem Lyapunov functions and the interactions satisfy certain conditions, then application of comparison equations 
can provide a certificate of exponential stability of the interconnected systems. The approach  we propose here uses
instead a generalization to interconnected systems of SOS based robust stability analysis 
techniques developed to estimate the  effect of parametric uncertainty~\cite{Antonis:2005b} and external disturbances 
on a polynomial system~\cite{Wloszek:2003}.

In this work  we use  sum-of-squares analysis  methods to devise a new algorithmic certification of asymptotic stability via the vector Lyapunov function approach. While this approach is generic, we apply the proposed algorithm to analyze the stability of
a structure preserving power system model \cite{Bergen:1981,Hill:1982}. 
 %
 %
  Unlike the network preserving models studied in the literature
 we do allow for nonzero transfer conductances in the transmission lines. 
 The network is decomposed into a number of low order interacting subsystems.   For each such subsystem, a SOS based expanding interior algorithm \cite{Wloszek:2003,Anghel:2013} is used to obtain estimate of region of attraction as sub-level sets of polynomial Lyapunov functions. Finally a sum-of-squares based scalable and parallel algorithm is used to certify stability in the sense of Lyapunov of the interconnected system by using the subsystem Lyapunov functions computed in the previous step. A distributed control strategy is proposed that can guarantee asymptotic stability of the interconnected system under given disturbances. 
 
 Following some brief background in Sec.~\ref{S:background} we outline the problem statement in Sec.~\ref{S:problem}. An algorithmic approach to certifying asymptotic stability is presented in Sec.~\ref{S:stability} while a distributed control strategy is discussed in Sec.~\ref{S:control}. Sec.~\ref{S:results} shows an application of our stability analysis and control approach to a network of three generators and six frequency dependent loads. We conclude the article in Sec.~\ref{S:conclusion}.

\section{\MakeUppercase{Basic Concepts and Background}}
\label{S:background}
\rem{Before formulating the problem, let us briefly review some of the key concepts behind our analysis.} We will first discuss how the stability of a dynamical system can be analyzed by constructing suitable Lyapunov functions. Then we briefly refer to sum-of-square polynomials and a very useful result which helps us in formulating the sum-of-squares problems. 

\subsection{Lyapunov Stability Methods}
\label{S:Lyap}
Let us consider the dynamical system described by the following polynomial differential algebraic equations (DAE)
\begin{subequations}\label{E:dae}
\begin{align}
\dot{z} &= F(z)\, , \\
0 &= G(z)\, ,
\end{align}
\end{subequations}
where $z \in  \mathbb{R}^m$, and $ F :   \mathbb{R}^m \rightarrow  \mathbb{R}^m$, $ G :  \mathbb{R}^{m} \rightarrow \mathbb{R}^{q}$ are vectors of polynomial functions.
We
assume without loss of generality that the origin is a stable equilibrium point for this
system, i.e.  $F(0) = 0$ and $G(0) = 0$.

The following extension of  Lyapunov stability theorem  to 
dynamical systems described by  differential algebraic equations presents a 
sufficient condition of stability through the construction of a certain positive definite function \cite{Antonis:2005b,Anghel:2013}.
\begin{theorem}\label{T:Lyap}
 If there exists an open set $D \subset  \mathbb{R}^m$,  with  $D$   a semi-algebraic domain 
 defined by the following inequality and equality constraints,
\begin{equation}\label{E:D_def}
  D = \lbrace z \in  \mathbb{R}^m \mid \beta - p(z) \geq 0, G(z) = 0 \rbrace\, ,
\end{equation}
with $p(z)$ a positive definite polynomial and $\beta  > 0$ to ensure that $D$ is connected and contains 
 $ z = 0 $,  and a continuously differentiable function $V: D \rightarrow \mathbb{R}$ such that $V(0) = 0$, and
 \begin{subequations}\label{E:Lyap}
\begin{align}
 V(z) & > 0\,  ,  \forall z \in  D \diagdown \lbrace 0 \rbrace\, ,\\
  - \dot{V}(z)  & > 0\, ,  \forall z \in    D \diagdown \lbrace 0 \rbrace\, ,
\end{align}
\end{subequations}
where $ \dot{V}(z) = \nabla{V}^T \cdot F(z)$, then $z=0$ is an asymptotically stable equilibrium of \eqref{E:dae}.
\end{theorem}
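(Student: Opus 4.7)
The plan is to mirror the classical Lyapunov direct method, but restricted to the algebraic constraint manifold $\mathcal{M} = \{z \in \mathbb{R}^m : G(z) = 0\}$ on which every trajectory of the DAE \eqref{E:dae} necessarily lives. Because any solution $z(t)$ satisfies $G(z(t)) = 0$ identically, the hypotheses effectively impose $V > 0$ and $\dot V < 0$ on $D \cap \mathcal{M} \setminus \{0\}$; the remainder of the argument is then a standard sublevel-set construction confined to this set.

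First I would establish Lyapunov stability. Since $p$ is continuous and positive definite, there exists $r_0 > 0$ such that the closed ball $\bar B_{r_0}$ sits inside $\{z : \beta - p(z) > 0\}$, so $\bar B_{r_0} \cap \mathcal{M} \subseteq D$. For any $\epsilon \in (0, r_0]$, compactness of $S_\epsilon := \{z \in \mathcal{M} : \|z\| = \epsilon\}$ together with $V > 0$ on $D \setminus \{0\}$ yields $\alpha := \min_{z \in S_\epsilon} V(z) > 0$. Choose $c \in (0, \alpha)$ and define $\Omega_c := \{z \in \bar B_\epsilon \cap \mathcal{M} : V(z) \leq c\}$. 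By continuity of $V$ at the origin there is $\delta > 0$ such that $V(z) < c$ whenever $\|z\| < \delta$ and $z \in \mathcal{M}$. A trajectory starting in this $\delta$-neighborhood lies in $\Omega_c$ initially and, since $\dot V < 0$ on $\Omega_c \setminus \{0\}$, cannot cross the level surface $V = c$; in particular it never reaches $\|z\| = \epsilon$.

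For asymptotic convergence, $t \mapsto V(z(t))$ is nonincreasing and bounded below by zero, hence converges to some $V^{\star} \geq 0$. If $V^{\star} > 0$, the trajectory stays in the compact annular set $K = \Omega_c \cap \{V \geq V^{\star}\} \subset D \setminus \{0\}$, on which $\dot V$ attains a strictly negative maximum $-\gamma < 0$; integrating yields $V(z(t)) \leq V(z(0)) - \gamma t$, contradicting $V \geq 0$ for large $t$. Therefore $V^{\star} = 0$, and positive definiteness of $V$ on the compact set $\Omega_c$ forces $z(t) \to 0$.

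The step I expect to be the main obstacle is the interaction between the equality constraint $G(z) = 0$ and the defining inequality $\beta - p(z) \geq 0$ of the semi-algebraic set $D$. In the ODE case one routinely uses compactness of closed sublevel sets of $V$; here one must verify that those sublevel sets, intersected with $\mathcal{M}$, remain compact and sit strictly inside the interior of $D$, so that trajectories cannot escape through the boundary $\beta - p(z) = 0$ where the sign of $\dot V$ is uncontrolled. The strict inequality $\beta > 0$ and the positive definiteness of $p$ at the origin are precisely what make this localization possible; once a trajectory is trapped inside such an interior sublevel set, the classical Lyapunov argument carries over unchanged.
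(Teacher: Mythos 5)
The paper never proves Theorem~\ref{T:Lyap}: it is stated as background, with the result attributed to the cited references \cite{Antonis:2005b,Anghel:2013}, so there is no in-paper argument to compare against. Judged on its own merits, your proof is correct, and it is essentially the argument that underlies those references: you exploit the one structural feature that distinguishes the DAE setting from the classical ODE one --- every solution of \eqref{E:dae} satisfies $G(z(t))=0$ by definition, so the hypotheses only need to hold on the constraint set $\mathcal{M}=\{z : G(z)=0\}$ --- and then you run the standard Khalil-style sublevel-set argument with every set intersected with $\mathcal{M}$. The key compactness claims go through because $\mathcal{M}$ is closed (zero set of continuous functions), so $S_\epsilon$, $\Omega_c$, and the annulus $K=\Omega_c\cap\{V\ge V^{\star}\}$ are compact and the min/max arguments survive; and your identification of the main localization issue (trajectories escaping through the boundary $\beta-p(z)=0$, where the sign of $\dot V$ is uncontrolled) together with its resolution (work inside $\bar B_\epsilon\cap\mathcal{M}\subseteq D$ with $\epsilon\le r_0$) is exactly right. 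Two small points would tighten the write-up: (i) $S_\epsilon$ may be empty for some $\epsilon\le r_0$, in which case $\alpha$ should be read as $+\infty$ and the trapping argument only gets easier; (ii) the contradiction $V(z(t))\le V(z(0))-\gamma t$ presumes the solution exists for all $t\ge 0$, which deserves one sentence: confinement to the compact set $\Omega_c$ plus local Lipschitz continuity of the polynomial field $F$ rules out finite-time escape. Neither point is a gap in the idea, only in the bookkeeping.
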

When there exists such a function $V(z)$, the region of attraction (ROA) of the stable 
equilibrium point at origin can be (conservatively) estimated as
\begin{subequations}\label{E:ROA}
\begin{align}
\mathcal{R}_A&:=\left\lbrace z \in D\left| V(z)\leq \gamma^{max}\right.\right\rbrace\\
\text{where,}~\gamma^{max}&:=\arg\max_\gamma\left\lbrace z \in D  \left| V(z)\leq\gamma\right.\right\rbrace \subseteq D
\end{align}
\end{subequations}
Without any loss of generality, the Lyapunov function can be scaled by $\gamma^{max}$, so that the ROA is given by,
\begin{equation}\label{E:ROA2}
\mathcal{R}_A:= \left\lbrace z \in D \left| {V}(z)\leq 1\right.\right\rbrace
\end{equation}
Henceforth, we would assume that the ROA is estimated to be the sub-level set of ${V}(z)=1$.

\subsection{Sum-of-Squares and Positivestellensatz Theorem}
\label{S:SOSmethod}
 While Thm.~\ref{T:Lyap} gives a sufficient condition for stability, it is  not a trivial task to find a \rem{suitable} function $V(z)$ that satisfies the conditions of stability. Relatively recent studies have explored how sum-of-squares (SOS) based optimization techniques can be utilized in finding Lyapunov functions by restricting the search space to SOS polynomials \cite{Wloszek:2003,Parrilo:2000,Tan:2006,Anghel:2013}. Let us denote $\mathcal{R}_m$ as the set of all polynomials in $z \in \mathbb{R}^m$. Then,
\begin{definition}
A multivariate polynomial $p(z) \in \mathcal{R}_m$ is a sum-of-squares (SOS) if there exist some polynomial functions $H_i(z), i = 1\ldots r$ such that 
$p(z) = \sum_{i=1}^r H_i^2(z)$,
and the set of all such SOS polynomials is denoted by
\begin{align}
\Sigma_{m} &:= \left\lbrace p(z)\in\mathcal{R}_m\left| ~p \text{ is SOS}\right.\right\rbrace \, .
\end{align}
\end{definition}
Given a polynomial $p\in\mathcal{R}_m$, checking if it is SOS is a semi-definite problem which can be solved with a MATLAB$^\text{\textregistered}$ toolbox SOSTOOLS \cite{sostools13,Antonis:2005a} along with a semidefinite programming solver such as SeDuMi \cite{Sturm:1999}.

Hence, if we search for a polynomial Lyapunov function in Thm.~\ref{T:Lyap},  and if 
we relax the polynomial non-negativity
conditions to appropriate polynomial sum of squares (SOS)
conditions, testing SOS conditions can then be done efficiently
using semidefinite programming (SDP) \cite{Parrilo:2000}.
Moreover,  an important result from algebraic geometry, called Putinar's Positivstellensatz theorem
\cite{Putinar:1993,Lasserre:2009}, helps in translating the SOS conditions into SOS feasibility problems. Before stating the theorem, let us define:
\begin{definition}\label{D:quad mod}
Given $g_j\in\mathcal{R}_m$, for $j=1,2,\dots,r$, the quadratic module generated by $g_j$'s is 
$\mathcal{M}(g_1,g_2,\dots,g_r):=\left\lbrace \sigma_0 + \sum_{j=1}^r \sigma_j g_j \left \vert \sigma_0,\sigma_j\in\Sigma_m,\forall j\right.\right\rbrace$\, .
\end{definition}
Then the Putinar's Positivestellensatz theorem states~\cite{Lasserre:2009},
{\begin{theorem}\label{T:Putinar}
Let $\mathcal{K}= \left\lbrace z \in\mathbb{R}^n\left\vert g_1(z) \geq 0, \dots , g_r(z)\geq 0\right.\right\rbrace$ be a
compact set. Suppose there exists $u(z)\in\mathcal{R}_m$ such that 
\begin{subequations}\label{E:Putinar}
\begin{align}
& u(z)\in\mathcal{M}(g_1, g_2,\dots , g_r),\\
\text{and,}~&\left\lbrace z \in\mathbb{R}^n\left\vert u(z)\geq 0\right.\right\rbrace~\text{is compact.}
\end{align}
\end{subequations}
If $p(z)$ is positive on $\mathcal{K}$, then $p(z)\in \mathcal{M}(g_1, g_2,\dots , g_r)$.
\end{theorem}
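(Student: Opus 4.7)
The plan is a proof by contradiction: assume $p \notin M$, where $M := \mathcal{M}(g_1,\ldots,g_r)$, and construct a point $z^\star \in \mathcal{K}$ with $p(z^\star) \leq 0$, contradicting the hypothesis that $p > 0$ on $\mathcal{K}$. The argument proceeds in three logical stages -- an Archimedean-property upgrade of the hypothesis, a Zorn's-lemma extension to a maximal ordering, and a representation of that ordering as evaluation at a real point.

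First, I would use conditions (\ref{E:Putinar}) to establish the \emph{Archimedean property} of $M$: for every $f \in \mathcal{R}_m$ there exists $N \in \mathbb{N}$ with $N \pm f \in M$. The key intermediate claim is the existence of $N_0 > 0$ such that $N_0 - \sum_{i=1}^m z_i^2 \in M$, to be deduced from $u \in M$ together with compactness of $\{u \geq 0\}$. Once this ball certificate lies in $M$, the Archimedean property for arbitrary $f$ follows by combining squares of the form $(\alpha \pm f)^2$ with an appropriate multiple of $N_0 - \sum z_i^2$.

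Second, under the contradictory assumption $p \notin M$, apply Zorn's lemma to the family of proper quadratic modules that contain $M$ and exclude $p$, ordered by inclusion; any chain has a proper union (no member contains $-1$, lest it contain $p$ via the Archimedean property), yielding a maximal element $P$. A standard real-algebraic-geometry argument then promotes $P$ to an \emph{ordering}: $P \cup (-P) = \mathcal{R}_m$, and the support $\mathfrak{p} := P \cap (-P)$ is a prime ideal. The promotion leans on maximality of $P$ and identities of the form $ab = \tfrac14[(a+b)^2 - (a-b)^2]$. Third, convert $P$ into a ring homomorphism $\phi: \mathcal{R}_m \to \mathbb{R}$: the induced ordering on the integral domain $\mathcal{R}_m/\mathfrak{p}$ is still Archimedean, so its field of fractions embeds order-preservingly into $\mathbb{R}$ by the classical theorem on Archimedean ordered fields. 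The point $z^\star := (\phi(z_1),\ldots,\phi(z_m))$ then satisfies $g_j(z^\star) = \phi(g_j) \geq 0$ since each $g_j \in P$, and $\phi(p) \leq 0$ since $p \notin P$ while $P \cup (-P) = \mathcal{R}_m$; hence $z^\star \in \mathcal{K}$ and $p(z^\star) \leq 0$, the desired contradiction.

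The main obstacle will be Step~1 -- the purely algebraic bootstrap from a single compact-sublevel-set witness $u$ to the ball certificate $N_0 - \sum z_i^2 \in M$. Compactness of $\{u \geq 0\}$ is a geometric fact, and turning it into an algebraic membership statement demands nontrivial manipulation rather than a soft continuity argument; this is where Putinar's original proof concentrates most of its technical work. By contrast, the Zorn-plus-embedding machinery of Steps~2--3 is fairly standard once the Archimedean property is in hand.
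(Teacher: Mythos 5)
First, a point of reference: the paper never proves Theorem~\ref{T:Putinar} --- it is quoted as a known result from \cite{Putinar:1993,Lasserre:2009} and used as a black box to justify the SOS reformulations such as \eqref{E:sos_lyap} and \eqref{E:cond_asymptotic_SOS}. So your proposal cannot be compared against an internal argument, only judged on its own merits. On those merits, your Steps 2--3 are the standard algebraic (Kadison--Dubois/Jacobi-style) route: Zorn's lemma yields a quadratic module $P \supseteq \mathcal{M}(g_1,\dots,g_r)$ maximal with respect to excluding $p$; maximality together with the Archimedean property promotes $P$ to an ordering with prime support $\mathfrak{p}$; the induced Archimedean ordering on the field of fractions of $\mathcal{R}_m/\mathfrak{p}$ embeds into $\mathbb{R}$ by H\"older's theorem; and evaluation at the resulting point $z^\star$ gives $z^\star \in \mathcal{K}$ with $p(z^\star) \le 0$, the desired contradiction. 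The supporting details you gesture at (the ``$-1$'' argument for chains, the identity $ab = \tfrac14[(a+b)^2-(a-b)^2]$ used to show the bounded elements form a subring) are the right ones.

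The genuine gap is exactly where you locate it, but it is deeper than your language suggests. Passing from the hypothesis ``$u \in \mathcal{M}(g_1,\dots,g_r)$ and $\{z : u(z) \ge 0\}$ compact'' to the ball certificate $N_0 - \sum_i z_i^2 \in \mathcal{M}(g_1,\dots,g_r)$ is not a ``nontrivial manipulation'' that can be filled in locally: in the literature this implication is W\"ormann's theorem, and its known proofs run through heavy machinery --- either Schm\"udgen's Positivstellensatz or, at minimum, the Krivine--Stengle Positivstellensatz plus W\"ormann's bootstrap argument. The clean way to close it is: since $\{u \ge 0\}$ is compact, choose $N_0$ with $N_0 - \sum_i z_i^2 > 0$ on $\{u \ge 0\}$; apply Schm\"udgen's theorem to the single-generator preordering $\Sigma_m + u\,\Sigma_m$ (which for one generator coincides with the quadratic module generated by $u$) to conclude $N_0 - \sum_i z_i^2 \in \Sigma_m + u\,\Sigma_m \subseteq \mathcal{M}(g_1,\dots,g_r)$. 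Schm\"udgen's theorem is itself a result of comparable depth to the one you are proving, so leaving this step as an acknowledged ``obstacle'' leaves the proof incomplete at its foundation: there is no elementary argument converting the geometric compactness of $\{u \ge 0\}$ into that algebraic membership. Either invoke Schm\"udgen/W\"ormann explicitly for Step~1, or restate the theorem with the Archimedean hypothesis $N_0 - \sum_i z_i^2 \in \mathcal{M}(g_1,\dots,g_r)$ (as many modern references do), after which your Steps 2--3 constitute a complete and correct proof.
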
}
It can be shown that for all  $g_i$'s used in this work the constraints (\ref{E:Putinar}) {would be redundant, i.e. the existence of $u(z)$ would be guaranteed.
If we further note that the equality constraints defined by the  components of $G(z)$ 
can be expressed by the pairs of inequalities $g_k \geq 0, \,g_k \leq 0,\, k=1,\ldots,q$, 
and if we define $g_{q+1} = \beta - p$, then using Thm.~\ref{T:Putinar}, with $r=q+1$,
 the search for $V(z)$ becomes a search for a feasible solution of the following problem 
 with SOS constraints~\cite{Wloszek:2003, Anghel:2013}:
\begin{subequations}\label{E:sos_lyap}
\begin{align}
 V  - \lambda_{1}^T G - s_1 (\beta - p) - \phi_1 &\in   \Sigma_m  \\
 -  \dot{V}  - \lambda_{2}^T G  - s_2 (\beta - p) - \phi_2 &\in   \Sigma_m
\end{align}
\end{subequations}
where $ s_i \in   \Sigma_m$, 
$ \lambda_i \in \mathcal{R}_{m}^{q}, $ are vectors of polynomial functions, and 
we choose $ \phi_i(z) = \epsilon_i \sum_{k=1}^m z_k^2$, $\epsilon_i > 0$, $i=1,2$.

Similarly, the search for ${\gamma^{max}}$ in Eq.~\eqref{E:ROA} can be formulated as a SOS programming problem 
and solved using a bisection search over $\gamma$ --- see \cite{Wloszek:2003, Anghel:2013} for details.

\section{\MakeUppercase{Problem Outline}}
\label{S:problem}

 Given the full dynamical system (\ref{E:dae}), we seek to decompose it into 
 $S$ weakly interacting subsystems as
\begin{subequations}\label{E:dae_decomp}
\begin{align}
&\dot{z}_i = F_i(z_i) + H_i(z), \quad  i =1,2,\dots,S, \\
&0 = G_i(z_i)\, , \\
&F_i(0)=0,~H_i(0)=0
\end{align}
\end{subequations}
where $z_i \in  \mathbb{R}^{m_i}$ is the state of the $i$-th subsystem $S_i$,
 $F_i$'s denote the isolated subsystem dynamics,  
$ G_i \in \mathcal{R}_{m_i}^{p_i}$ are vectors of polynomials
defining the algebraic constraints of the subsystem, 
and $H_i$'s are the interactions from the neighbors \cite{Jocic:1978}. 
If we denote by $\mathcal{Z}_i$ the subset of state points that correspond to $z_i$ then 
their union, i.e.
\begin{equation}
\mathcal{Z} = \mathcal{Z}_1 \cup \mathcal{Z}_2 \cup \ldots \cup \mathcal{Z}_s\, ,
\end{equation}
where $\mathcal{Z} = \lbrace  z_1, z_2, \ldots, z_m \rbrace$, represents the set of state points of 
the whole dynamical system \eqref{E:dae}. In the decomposition used in this paper 
the subsets $\mathcal{Z}_i, i=1,\ldots,S,$ are not disjoint.

We  assume that the interactions can be expressed as,
\begin{align}
\forall i, ~H_i(z) = \sum_{j\neq i} H_{ij}(z_i, z_j)
\end{align}
where $H_{ij}$ quantifies how the states $z_j$ of subsystem $j$ affect the dynamics of $z_i$. Let us also denote by
\begin{align}\label{E:Ni}
\mathcal{N}_i := \left\lbrace i\right\rbrace\cup\left\lbrace j\left\vert ~\exists \, z_i,z_j, ~\text{s.t.}~H_{ij}\left(z_i,z_j\right)\neq 0 \right.\right\rbrace,
\end{align}
the set of neighbors of node $i$ (including the subsystem $i$ itself). We assume that the isolated subsystems are individually (locally) stable, and  there exist Lyapunov functions for each of the isolated subsystems.
The goal is to develop a framework for the stability analysis of the full interconnected system by using the local subsystem Lyapunov functions and considering the neighbor interactions.  
In the work reported here we use an "extreme" decomposition in which the subsystems are defined by the 
nodes of the original network together with a reference node that is shared by all subsystems. A similar approach has been used in \cite{Jocic:1977}. A decomposition algorithm  proposed in \cite{Antonis:2012}, and used for a classical power system model in \cite{Anghel:2013b},  was not used  here, but will be used in the future to analyze the impact the decomposition has on the performance of the algorithm.

Given a decomposition \eqref{E:dae_decomp}, our next goal is to find polynomial 
Lyapunov functions $V_i(z_i) \in \mathcal{R}_{{m}_i} $ for each isolated subsystem
$  i =1,2,\dots,S,$
\begin{subequations}\label{E:ss_dae}
\begin{align}
&\dot{z}_i = F_i(z_i) ,  \\
&0 = G_i(z_i)\, .
\end{align}
\end{subequations}
The first step in this search is to solve  the SOS program \eqref{E:sos_lyap} which for each subsystem $\mathcal{S}_i$ is
 formulated as 
\begin{subequations}\label{E:ss_initV}
\begin{align}
 V_i  - \lambda_{i1}^T G_i - s_{i1} (\beta_i - p_i) - \phi_{i1} &\in   \Sigma_{m_i}  \\
 -  \dot{V}_i  - \lambda_{i2}^T G_i  - s_{i2} (\beta_i - p_i) - \phi_{i2} &\in   \Sigma_{m_i}
\end{align}
\end{subequations}
where $\beta_i > 0$, $ s_{i1}, s_{i2} \in   \Sigma_{m_i} $, 
$ \lambda_{i1}, \lambda_{i2} \in  \mathcal{R}_{m_i} ^{p_i}, $  
 and $\phi_{i1}(z_i),~\phi_{i2}(z_i),~p_i(z_i)$ are positive definite polynomials. {Starting from an initial Lyapunov function candidate obtained by solving \eqref{E:ss_initV}, and a corresponding estimate of the region of attraction, 
 an iterative process called \textit{expanding interior algorithm}, \cite{Wloszek:2003,Anghel:2013}, is used to iteratively enlarge the estimate of the region of attraction by finding a better Lyapunov function at each step of the algorithm.} At the completion of this iterative step, the stability of each isolated subsystem (assuming no interaction) is quantified by its Lyapunov function $V_i(z_i)$, with an estimation of the boundary of the domain of attraction given by $\mathcal{R}_{A,i} = \left\lbrace z_i\in\mathbb{R}^{m_i}\left| G_i(z_i) =0, V_i(z_i)\leq 1\right.\right\rbrace$.

The Lyapunov level-sets can be used to express the strength of a disturbance. The equilibrium point of the system at origin corresponds to the level set $V_i(0)=0,\forall i$.  If there is a disturbance from this equilibrium point, the states of the system would move to some point $x(0)$ away from the origin. This disturbed initial condition would result in positive level-sets $V_i(z_i(0))=\gamma_i^0\in\left(0,1\right]$ for some or all of the subsystems. 
A necessary and sufficient condition of asymptotic stability can then be translated into the condition
\begin{align}\label{E:cond_asymp}
\forall i, ~V_i(z_i(0))=\gamma_i^0\implies\forall i, ~\lim_{t\rightarrow +\infty}{V}_i(z_i(t))=0
\end{align}

In the rest of the article, we present SOS algorithms to test stability conditions and design local (subsystem-level) control laws to achieve asymptotic stability.

\section{\MakeUppercase{Stability under Interactions}}\label{S:stability}
It is assumed that the isolated subsystems in \eqref{E:ss_dae} are all (locally) asymptotically stable, and that there exist subsystem Lyapunov functions $V_i(z_i),\forall i$. {The estimated} region of attraction of the interconnected system under no interaction, $\mathcal{R}_A^0$, is given by the cross-product of the regions of attraction of the isolated subsystems, $\mathcal{R}_{A,i}$, which are defined as sub-unity-level sets of the corresponding (properly scaled) subsystem Lyapunov functions (as in (\ref{E:ROA})-(\ref{E:ROA2})), i.e.
\begin{subequations}\label{E:ROA_isol}
\begin{align}
\mathcal{R}_A^{0} :=&~\mathcal{R}_{A,1}\times \mathcal{R}_{A,2}\times \dots \times\mathcal{R}_{A,m}  \notag \\
\text{where},~ \mathcal{R}_{A,i} = &\left\lbrace z_i\in\mathbb{R}^{m_i}\left| G_i(z_i) = 0, V_i(z_i)\leq 1\right.\right\rbrace, 
~\forall i \, . \notag
\end{align}
\end{subequations}
In presence of non-zero interactions, the resulting ROA would be different. If there exists a Lyapunov function for the interconnected system, the ROA for the whole system could be expressed as some sub-level set of that Lyapunov function. While it is very hard to obtain a scalar Lyapunov function for the full interconnected system, one could use vector Lyapunov function approach to obtain certification of stability in a scalable way.

In this present work, we choose not to impose any further restriction on the Lyapunov functions $V_i(z_i)$ than requiring that those are in polynomial forms, and concern ourselves with asymptotic stability. 
We now present a distributed iterative procedure which can be used to certify asymptotic stability in a domain defined by sub-level sets of the subsystem Lyapunov functions.

\subsection{Algorithmic Test of Asymptotic Stabiltiy}\label{S:asymptotic}
Before proceeding to explaining our algorithm, let us first note the following result:
\begin{lemma}\label{L:asymptotic}
Suppose, for all $i\in\left\lbrace 1,2,\dots, S \right\rbrace$, there exists a strictly monotonically decreasing sequence of scalars $\left\lbrace \epsilon_i^k\right\rbrace,k\in\left\lbrace 0,1,2,\dots\right\rbrace$, such that
\begin{subequations}\label{E:cond_asymptotic}
\begin{align}
\forall i,k,~\dot{V}_i(z_i)&:=\nabla{V}_i(z_i)^T\left(F_i(z_i)+H_i(z)\right)\!<\!0,~\forall z \in\mathcal{D}_i^k\\
\text{where,}~\mathcal{D}_i^k &:=\!\left\lbrace\!\! z \in\mathbb{R}^{\bar{m}_i} \left \vert \!\!\!
\begin{array}{c}
\epsilon_i^{k+1}\leq V_i(z_i)\leq\epsilon_i^k,\\
V_j(z_j)\leq\epsilon_j^k~    \forall j\in\mathcal{N}_i\setminus\{i\}\\
G_j(z_j) = 0   ~\forall j\in\mathcal{N}_i
\end{array} 
\right.\!\!\!\!\!\!\right\rbrace \label{E:Dk}
\end{align}
\end{subequations}
Then the system \eqref{E:dae} is asymptotically stable in the domain $\left\lbrace z \in\mathbb{R}^m \left| \bigcap_{i=1}^S V_i(z_i)\leq \epsilon_{i}^0\right.\right\rbrace$, if $\lim_{k\rightarrow +\infty} \epsilon_i^k= 0,\forall i$
If the limit condition does not hold, we can only guarantee stability in the sense of Lyapunov \cite{Slotine:1991,Lyapunov:1892}.
\end{lemma}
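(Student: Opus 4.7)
The plan is to establish positive invariance of a nested family of sublevel sets
$\Omega_k := \{z \mid V_j(z_j) \leq \epsilon_j^k,\, G_j(z_j)=0,\; \forall j\}$
and show that trajectories contract from $\Omega_k$ into $\Omega_{k+1}$ in finite time; iterating then yields convergence if $\epsilon_i^k \to 0$, and mere positive invariance of $\Omega_0$ otherwise. Note that any $z(0)$ with $V_i(z_i(0))\leq\epsilon_i^0$ for all $i$ lies in $\Omega_0$, so the ``domain'' in the lemma statement is precisely $\Omega_0$.

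First I would prove positive invariance of $\Omega_k$. Suppose $z(t)\in\Omega_k$ and the trajectory touches a boundary face $V_i(z_i) = \epsilon_i^k$. Then $V_i\in[\epsilon_i^{k+1},\epsilon_i^k]$ and $V_j(z_j)\leq\epsilon_j^k$ for all $j\in\mathcal{N}_i\setminus\{i\}$, so the state belongs to $\mathcal{D}_i^k$ and hypothesis \eqref{E:cond_asymptotic} forces $\dot{V}_i(z_i)<0$. Hence $V_i$ cannot exceed $\epsilon_i^k$, and $\Omega_k$ is forward invariant.

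Next I would show that $\Omega_{k+1}$ is reached in finite time from any initial point in $\Omega_k$. For each $i$ with $V_i(z_i(t))>\epsilon_i^{k+1}$, the state still lies in $\mathcal{D}_i^k$ (since we remain in $\Omega_k$ by Step~1), so $\dot{V}_i<0$. The set $\mathcal{D}_i^k$ is closed and bounded (as a sublevel set intersected with the algebraic variety $G_j=0$), hence compact, and continuity of $\dot{V}_i$ yields a uniform bound $\dot{V}_i \leq -\delta_i^k<0$ there. Thus $V_i$ decreases at rate at least $\delta_i^k$ and reaches $\epsilon_i^{k+1}$ in time at most $(\epsilon_i^k-\epsilon_i^{k+1})/\delta_i^k$; the same strict inequality on the face $V_i=\epsilon_i^{k+1}$ furnishes a ``trap'' argument preventing re-emergence. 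Taking the maximum over $i$ gives a finite time $T_k$ after which $z(t)\in\Omega_{k+1}$.

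Finally, iterating the previous two steps, $z(t)\in\Omega_k$ for all $t\geq T_0+\cdots+T_{k-1}$. If $\epsilon_i^k\downarrow 0$, positive-definiteness of each $V_i$ (it is a Lyapunov function at the origin) implies $z_i(t)\to 0$, establishing attractivity; combined with the Lyapunov stability that follows from invariance of arbitrarily small $\Omega_k$, this gives asymptotic stability of the origin on $\Omega_0$. If the limit condition fails, we lose attractivity but retain invariance of $\Omega_0$, i.e.\ stability in the sense of Lyapunov. The main technical obstacle is Step~2: justifying the uniform decay rate $\delta_i^k$ via compactness of $\mathcal{D}_i^k$, and carefully handling the case in which some $V_i$ descends below $\epsilon_i^{k+1}$ earlier than its neighbors, so that the strict decay hypothesis is no longer directly applicable on $V_i<\epsilon_i^{k+1}$ yet invariance below that threshold must still be maintained.
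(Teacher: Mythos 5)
Your proposal is correct and takes essentially the same route as the paper's proof: both arguments drive the trajectory through the nested sublevel sets $\Omega_k$ in finite time by exploiting a uniform negative bound on $\dot{V}_i$ over the compact annular regions $\mathcal{D}_i^k$ (the paper's $\bar{r}_i^k := \sup_{z\in\mathcal{D}_i^k}\dot{V}_i < 0$ is your $-\delta_i^k$), then iterate over $k$ and use $\epsilon_i^k \to 0$ to conclude $V_i(t)\to 0$ for all $i$. If anything, you are more careful than the paper, which leaves the forward invariance of $\Omega_k$ and the ``trap'' argument at the face $V_i=\epsilon_i^{k+1}$ implicit; your observation that this face still lies in $\mathcal{D}_i^k$ (so the strict-decrease hypothesis applies there even when neighbors have not yet descended) is precisely what closes the technical obstacle you flagged.
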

\begin{proof}
Please refer to Appendix~\ref{A:proof}.
\end{proof}

Using the Lemma~\ref{L:asymptotic} we can devise a simple iterative SOS algorithm to certify whether or not a domain $\mathcal{D}$ defined by
\begin{align}\label{E:D}
\mathcal{D}:=\left\lbrace z \in\mathbb{R}^m 
                         \left| \bigcap_{i=1}^m 
                                \left\lbrace 
                                V_i(z_i)\leq \gamma_i^0 , G_i(z_i) = 0 
                                \right\rbrace 
                           \right. 
                       \right\rbrace, 
\end{align}
for some scalars $\gamma_i^0\in\left(0,1\right],\forall i$, is a region of asymptotic stability for the system in \eqref{E:dae}. It is to be noted that, using the Putinar's Positivstellensatz theorem (Theorem~\ref{T:Putinar}), the condition in (\ref{E:cond_asymptotic}) essentially translates into equivalent SOS feasibility conditions
\begin{align}\label{E:cond_asymptotic_SOS}
&\forall i,k,~\exists \sigma_{i}^k,\sigma_{ij}^k\in\Sigma_{\bar{m}_i}, 
     \lambda_j^k \in \mathcal{R}_{\bar{m}_i}^{p_j} ~\text{s.t.}  \notag \\
&-\nabla{V}_i^T\left(F_i+H_i\right) -\sigma_{i}^k\left(V_i-\epsilon_i^{k+1}\right)  \notag  \\
&  \qquad - \sum_{j\in\mathcal{N}_i}\sigma_{ij}^k\left(\epsilon_j^k-V_j\right)  
                 -   \sum_{j\in\mathcal{N}_i} (\lambda_j^k)^T G_j  \in\Sigma_{\bar{m}_i} \\
&\text{where,}~\bar{m}_i = \sum_{j\in\mathcal{N}_i}m_j.\notag
\end{align}
The algorithmic steps to ascertain asymptotic stability are as outlined below:
\begin{enumerate}
\item We initialize $\epsilon_i^0=\gamma_i^0,\forall i\in\left\lbrace 1,2,\dots,S \right \rbrace$, and choose a sufficiently small $\bar{\epsilon}\in\mathbb{R}^+$.

\item\label{I:iteration} At the start of the $k$-th iteration loop, we assume to {know} the scalars $\left\lbrace \epsilon_i^0,\epsilon_i^1,\dots,\epsilon_i^k\right\rbrace,\forall i$, and our aim is to compute the scalars {$\epsilon_i^{k+1},~\forall i$} such that (\ref{E:cond_asymptotic_SOS}) holds. Essentially we want to solve the optimization problem,
\begin{equation}\label{E:max_epsilon}
\forall i,~ \min_{\sigma_{i}^k,\sigma_{ij}^k,\lambda_j^k} ~\epsilon_i^{k+1}
\quad \text{s.t. \eqref{E:cond_asymptotic_SOS} holds.}
\end{equation}
This is solved by performing a bisection search for minimum $\epsilon_i^{k+1}$ over the range $\left[0,\epsilon_i^k\right]$. 

If (\ref{E:max_epsilon}) is infeasible at $0$-th iteration
 for any $i\in\left\lbrace 1,2,\dots, S \right\rbrace$, we conclude that the system cannot be guaranteed to be asymptotically stable in $\mathcal{D}$, and abort the iteration. Otherwise we move on to step \ref{I:conclusion}.

\item\label{I:conclusion} If $\left(\epsilon_i^k-\epsilon_i^{k+1}\right)\geq\bar{\epsilon},\forall i$, we continue from step \ref{I:iteration} for the ($k$+1)-th iteration loop. Otherwise we stop the iteration deciding that the limits of the sequences $\left\lbrace\epsilon_i^k\right\rbrace,\forall i,$ have been attained. Further, if the {limits are all zero, we certify asymptotic stability in $\mathcal{D}$}.
\end{enumerate}

\subsection{Remarks}\label{S:asymptotic_remarks}
The algorithm presented in Sec.~\ref{S:asymptotic} describes how one can determine asymptotic stability of an interconnected system in a domain $\mathcal{D}$ defined by the subsystem sub-level sets. This test can be performed locally, and in a parallel way, at each subsystem level. The Lyapunov functions $V_i$'s are to be found before the start of the analysis, and communicated to the neighboring subsystems. Then during each analysis, it is assumed that the neighboring subsystems can communicate with each other the computed sequences $\left\lbrace \epsilon_i^k\right\rbrace$ {in} real-time. With the help of {the} stored Lyapunov functions, and the updated $\left\lbrace \epsilon_i^k\right\rbrace$ of the neighbors, each subsystem will continue the iterative process outlined in Sec.~\ref{S:asymptotic}. Since only the neighbor information is required, this algorithm is reasonably scalable with respect to the size of the full interconnected system. Moreover, the algorithm motivates the design of a distributed control strategy that can ascertain asymptotic stability.

\section{\MakeUppercase{Local Control}}\label{S:control}
In this section, we discuss design of a \textit{local} and \textit{minimal} control strategy such that the system in \eqref{E:dae} is asymptotically stable in a domain $\mathcal{D}$ defined in (\ref{E:D}). We use the term \textit{minimal} to suggest that the control be applied only in certain regions, and not everywhere, in the state space, while by the term \textit{local} we suggest that the control is computable and implementable on a subsystem level.  

We envision the control to be computed by each subsystem at each iteration loop. At $k$-th iteration, $\forall k\in\left\lbrace 0,1,2,\dots\right\rbrace$, the  $i$-th subsystem, $\forall i\in\left\lbrace 1,2,\dots, S \right\rbrace$ performs the following tasks:
\begin{enumerate}
\item It identifies if it belongs to the following set 
\begin{align}
\mathcal{U}^k&:= \left\lbrace i\in\left\lbrace 1,\dots,S \right\rbrace\left\vert 
\begin{array}{c}\nabla{V}_i^T\left(F_i+G_i\right)\geq0, \\
     V_i=\epsilon_i^k,\\
     V_j\leq\epsilon_j^k ~\forall j\in\mathcal{N}_i\setminus\{i\} \\
    G_j(z_j) = 0  ~\forall j\in\mathcal{N}_i
\end{array}\right.\right\rbrace \notag
\end{align}
which can be checked locally. If $i\notin\mathcal{U}^k$, control is not necessary and it sets $U_i^k\equiv0$ and proceeds to task~\ref{I:control_dyn}. If, however, $i\in\mathcal{U}^k$, it proceeds to task~\ref{I:Fi} to compute a control law.

\item\label{I:Fi} If $i\in\mathcal{U}^k$, a polynomial state-feedback control law $U_i^k:\mathbb{R}^{m_i}\rightarrow\mathbb{R}^{m_i}$, $U_i^k(0 )= 0$, is computed such that
\begin{align}
 \nabla{V}_i^T (F_i+ & G_i + U_i^k )< 0\, , \forall z  \in \mathcal{I}_i^k  \, ,\label{E:Fi_K} \\
 \text{where} ~\mathcal{I}_i^k & := 
 \left\lbrace 
 z \in \mathbb{R}^{\bar{m}_i} 
 \left\vert
\begin{array}{c}
V_i=\epsilon_i^k, \\
V_j\leq\epsilon_j^k~\forall j\in\mathcal{N}_i \setminus\{i\} \\
 G_j(z_j) = 0 ~\forall j \in\mathcal{N}_i
\end{array}
\right. 
\right\rbrace \notag
\end{align}
This produces the equivalent SOS condition,
\begin{align}
&- \nabla{V}_i^T \left( F_i + G_i + U_i^k \right)  - \rho_{i}^k \left( \epsilon_i^{k} - V_i \right)  \label{E:sos_K} \\
&  \qquad - \sum_{j\in\mathcal{N}_i\setminus\{i\}} \sigma_{ij}^k \left( \epsilon_j^k - V_j \right)
  -  \sum_{j\in\mathcal{N}_i} (\lambda_j^k)^T G_j 
           \in \Sigma_{\bar{n}_i}\notag\\
&\rho_{i}^k\in\mathcal{R}_{\bar{m}_i},~\sigma_{ij}^k\in\Sigma_{\bar{m}_i}\forall j\neq i, \lambda_j^k \in \mathcal{R}_{\bar{m}_i}^{p_j}, 
        ~\bar{m}_i = \sum_{j\in\mathcal{N}_i} m_j \notag
\end{align}

\item\label{I:control_dyn} Finally, it performs the search over minimum $\epsilon_i^{k+1}$, as in \eqref{E:max_epsilon}, with the un-controlled subsystem dynamics $\left(F_i+G_i\right)$ in the feasibility condition \eqref{E:cond_asymptotic_SOS}  replaced by the controlled dynamics $\left(F_i+G_i+U_i^k\right)$.
\end{enumerate}

To summarize, each subsystem $i$ computes control laws $U_i^k:\mathbb{R}^{m_i}\rightarrow\mathbb{R}^{m_i}$, with $U_i^k(0 )= 0$, during each $k$-th iteration, so that the subsystem dynamics under control becomes:
\begin{align}
\forall i,~\forall k, ~&\forall x\in\mathcal{D}_i^k,\notag\\
\dot{z}_i &= \left\lbrace \begin{array}{ll}F_i(z_i) + H_i(z_i), ~G_i(z_i) = 0, &i\notin\mathcal{U}^k\\
			F_i(z_i) + H_i(z_i)+U_i^k(z_i), ~G_i(z_i) = 0,&i\in\mathcal{U}^k\end{array}\right. \notag
\end{align}
where $\mathcal{D}_i^k$ were defined in (\ref{E:Dk}).

{\subsection{Remarks}
Often it is important to impose certain additional constraints on the possible control laws, such as bounds on the control effort. Although control bounds can be easily incorporated in the SOS formulation, we decide to keep that for future studies. 
\rem{We note,} However, that since we apply controls $U_i^k$ only on certain subsystems $i\in\mathcal{U}^k$, and in certain domains $\mathcal{D}_i^k\subseteq\mathcal{D}$, the control effort would be reasonably bounded.}

\section{\MakeUppercase{RESULTS}}\label{S:results}
Let us describe the model of the interconnected system that we use here, and two examples to illustrate the applications of the stability analysis algorithm and control design.
 
 \subsection{Model Description}\label{S:model}
 
 {W}{e} will consider the network preserving model with linear frequency dependent real power loads introduced in \cite{Bergen:1981,Hill:1982}.
 The network consists of  $G$ generators and $L$ load buses  connected by transmission lines. 
 We number the load buses $1,2,\ldots, L$ 
 and the generator buses $L+1,\ldots, n$, where $n=L+G$ is the total number of nodes.
 The node voltages are denoted by $
E_1 \angle \delta_1, \ldots, E_n \angle \delta_n$, where $\delta_1,
\ldots, \delta_n$ are the  phase angles 
and the magnitudes $E_1, \ldots, E_n$ are
assumed constant.
The electrical power injected into network at node $i$ is
\begin{equation}\label{E:Pe}
\begin{split}
 P_{Ei} (\delta) &= E_i^2 G_{ii} +  \sum_{j, j\neq i} E_i E_j   Y_{ij} \cos (\delta_i - \delta_j - \theta_{ij})  
\end{split}
\end{equation}
where  $Y_{ij}$ is the modulus, and $\theta_{ij}$ the phase angle, of the transfer admittance 
between nodes $i$ and $j$. 
%
 
 For  small frequency variations around the operating point  $P_{D_i}$ 
 the dynamics of the load nodes  are described by
\begin{equation}\label{E:load_dyn}
  D _i   \dot{\delta}_{i}  =   -  P_{D_i}  - P_{E_i}(\delta) \, , \, i=1,\ldots,L\, ,
\end{equation}
  where  $D_i > 0$ is the load-frequency coefficient and $P_{D_i} > 0  $ 
  is the real power drawn at the load buses.
Each generator  dynamics are
modeled by the swing equations. Thus, for  $i = 1,\ldots, G,$
\begin{subequations}\label{E:gen_dyn}
\begin{align}
  \dot{\delta}_{L+i} &= \omega_{i} \, ,  \\
  M_i \dot{\omega}_{i}  +D _{L+i} \omega_{i}  &=  P_{M_i}   - P_{E_{L+i}}(\delta) \, ,
\end{align}
\end{subequations}
where   $M_i  > 0$ is the generator inertia constant,  
$ D_{L+i} > 0   $ is the generator damping coefficient,
 $ P_{M_i} >0 $ is the mechanical power input.

We  assume that the dynamical system has  a stable equilibrium point  given by  
$( \delta_{s},\omega_{s} = 0)$ where $\delta_{s} $ is the solution of the following set of nonlinear equations,
\begin{align}
-  P_{D_i}  - P_{E_i}(\delta_s) &= 0\, , \text{for}\, \, i=1,\ldots,L \\
P_{M_i} - P_{E_{L+i}}(\delta_s) &= 0 \, , \text{for}\, \, i= 1,\ldots,G
\end{align}

The state space of the dynamical system is described by the relative angles 
$\delta_{in} = \delta_i - \delta_n$, for $i=1,\ldots,L+G-1$, with respect to a
reference node $n$ (generator $G$, considered to have the largest inertia), 
 and {relative generator speeds 
$\omega_{in}=\omega_i - \omega_n$, for $i=1,\ldots,G-1$, for all generators except the reference generator 
for which we consider the absolute speed $\omega_n=\omega_G$}. (We have considered that the ratio $D_{L+i}/M_i$ 
is uniform.) 
%
 The dynamics of the relative angles  and speeds are obvious  and are not explicitly presented here.
Finally,  we make the following change of variables,
$\delta \rightarrow \delta + \delta_{s}$, in  \eqref{E:load_dyn} and \eqref{E:gen_dyn}
 in order to transfer the stable equilibrium point to the origin in phase space.

\subsection{Recasting the Power System Dynamics}
\label{sec:recasting_method}

SOS programming  methods cannot be directly applied  to   study the stability of power system  models because their dynamics contain trigonometric nonlinearities and are not polynomial.  For this reason a systematic methodology to recast their dynamics into a polynomial form is necessary \cite{Antonis:2002, Antonis:2005}.  The recasting 
introduces  a number of equality constraints restricting the states to a manifold having the original state dimension. 
For the network preserving power system model  recasting is achieved  by  a non-linear change of variables,
{\begin{subequations}\label{E:variable_recasting}
\begin{align}
\{ \delta_{in} \} & \rightarrow 
    \left\lbrace\!\!
    \begin{array}{c}
     z_{2i-1} = \sin(\delta_{in}) \\
     z_{2i} = 1 - \cos(\delta_{in})
     \end{array}
    \!\!\! \right\rbrace\, , \,  i=\!\!1,\ldots, L\! +\!G\!-\!1\!\, , \\
 \{  \omega_{in} \}  
&\rightarrow  \{ z_{2(L+G-1)+i} = \omega_{in} \} \, , \, i=1,\ldots, G-1\, , \\
 \{  \omega_{i} \}  
&\rightarrow  \{ z_{2(L+G-1)+i} = \omega_{i} \} \, , \, i=G\, ,
\end{align}
\end{subequations}}
and the introduction of the following constraints
\begin{equation}\label{E:variable_constraints}
 z_{2i-1}^2 + z_{2i}^2 - 2 z_{2i} = 0\, , \, i=1,\ldots, L  + G - 1\, .
\end{equation}
Thus, recasting produces a dynamical system with a larger state dimension, $z \in  \mathbb{R}^m$,
where $m= 2(L+G-1)+G$ and  introduces  $ q= L+G-1$  equality constraints.
 The stable equilibrium point of the original system
 is mapped to $z_s=0$.
 The original system dynamics 
are recasted into the polynomial differential algebraic equations \eqref{E:dae}.

\subsection{Test Case: IEEE 9-bus System}
\label{sec:test_case}

 \begin{figure}[thpb]
      \centering
	\includegraphics[width=2.5in]{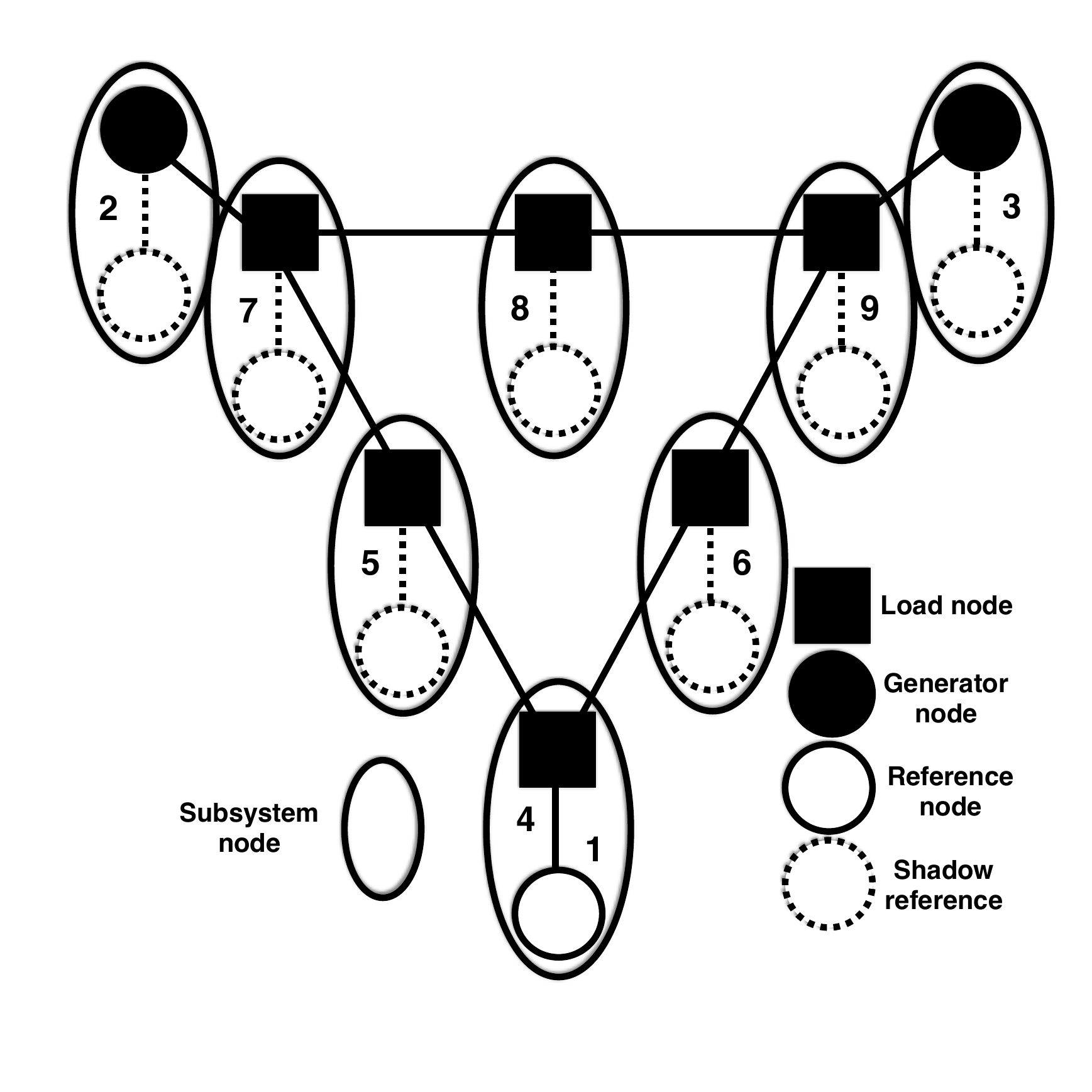} 
	      \vspace*{-0.8cm}\caption{Network of three generators and six load nodes. We perform
	      an overlapping decomposition  in which the speed dynamics of the 
	      reference node (generator node 3) is shared with all the subsystems. This 
	      results in the following overlapping subsystems: {$S_1=\{4,1\}, S_2=\{5,1\},
	      S_3=\{6,1\},S_4=\{7,1\},S_5=\{8,1\},S_6=\{9,1\},S_7=\{2,1\},S_8=\{3,1\}$}.
	      }
      \label{F:net5}
   \end{figure}

\begin{figure*}[thpb]
\centering
\subfigure[ROA of subsystem 2 (node 5)]{
\includegraphics[width=2.5in]{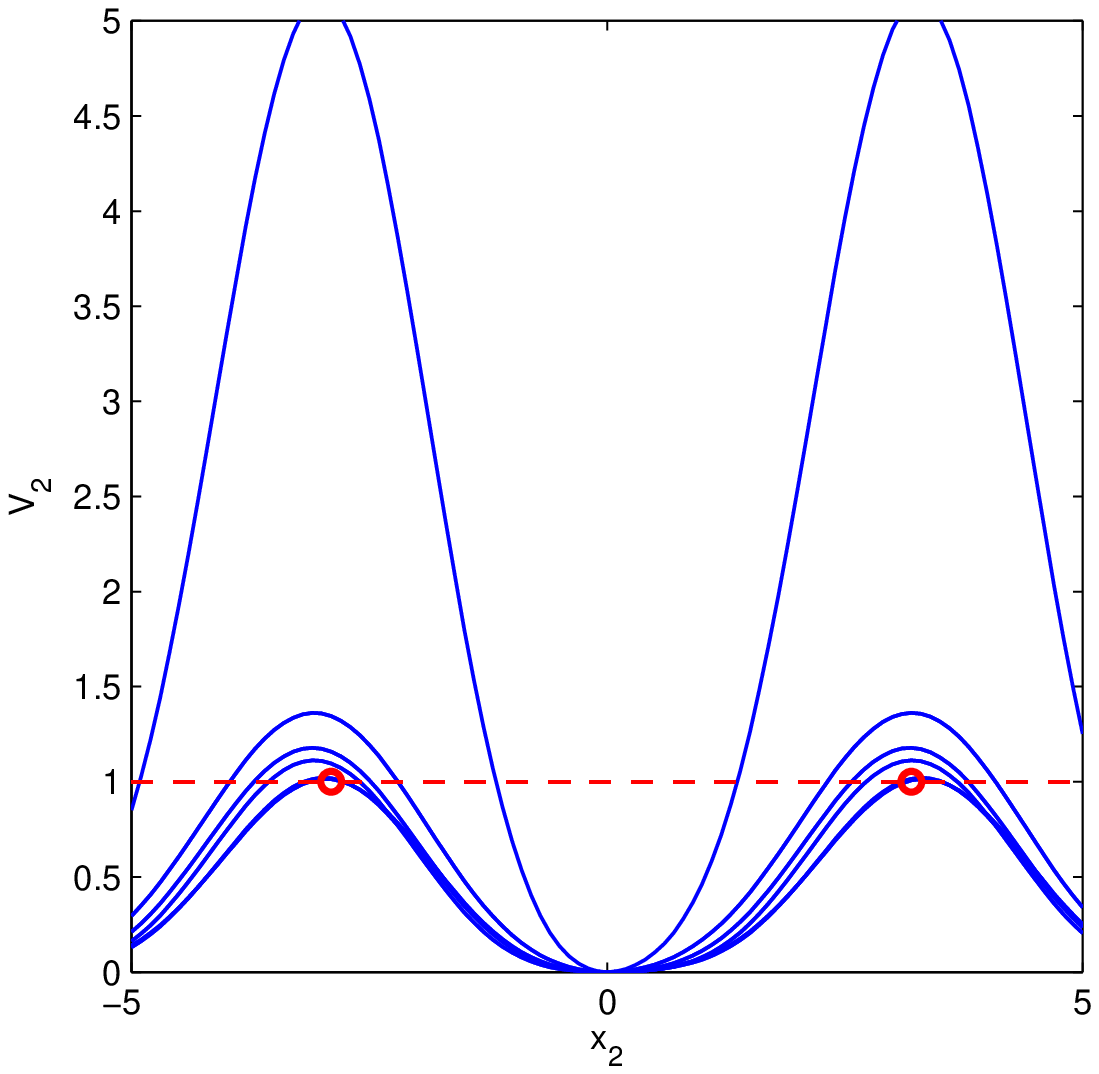}\label{F:roa_ss2}
}\quad
\subfigure[ROA of subsystem 7 (node 2)]{
\includegraphics[width=2.5in]{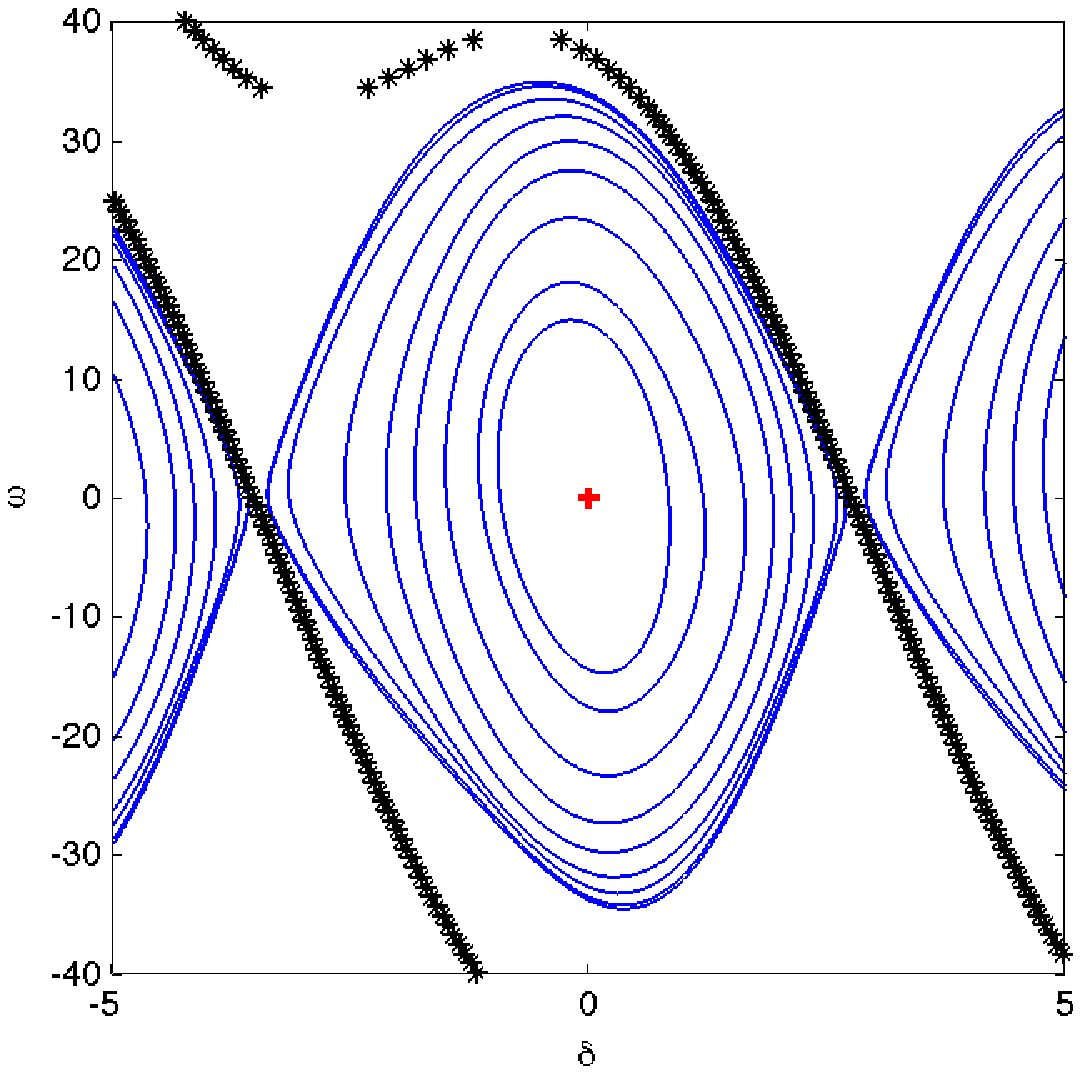}\label{F:roa_ss7}
}\caption[]{ Estimates of regions of attraction of isolated subsystems using expanding interior algorithm.}
\label{F:roa}
\end{figure*}
\begin{figure*}[thpb]
\centering
\subfigure[Load angles, before control]{
\includegraphics[width=2.2in]{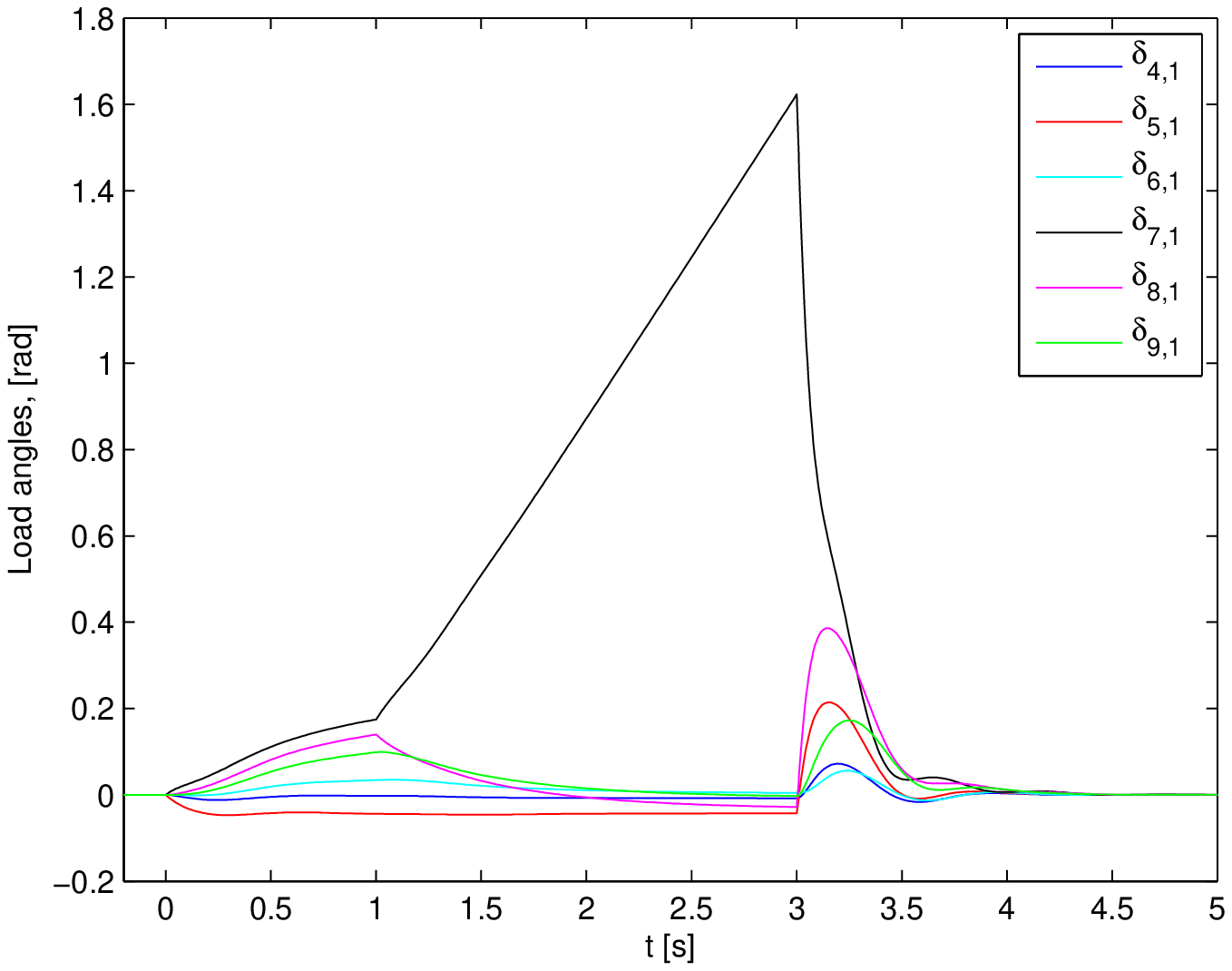}\label{F:prestates}
}\hspace{0.01in}
\subfigure[Generator states, before control]{
\includegraphics[width=2.2in]{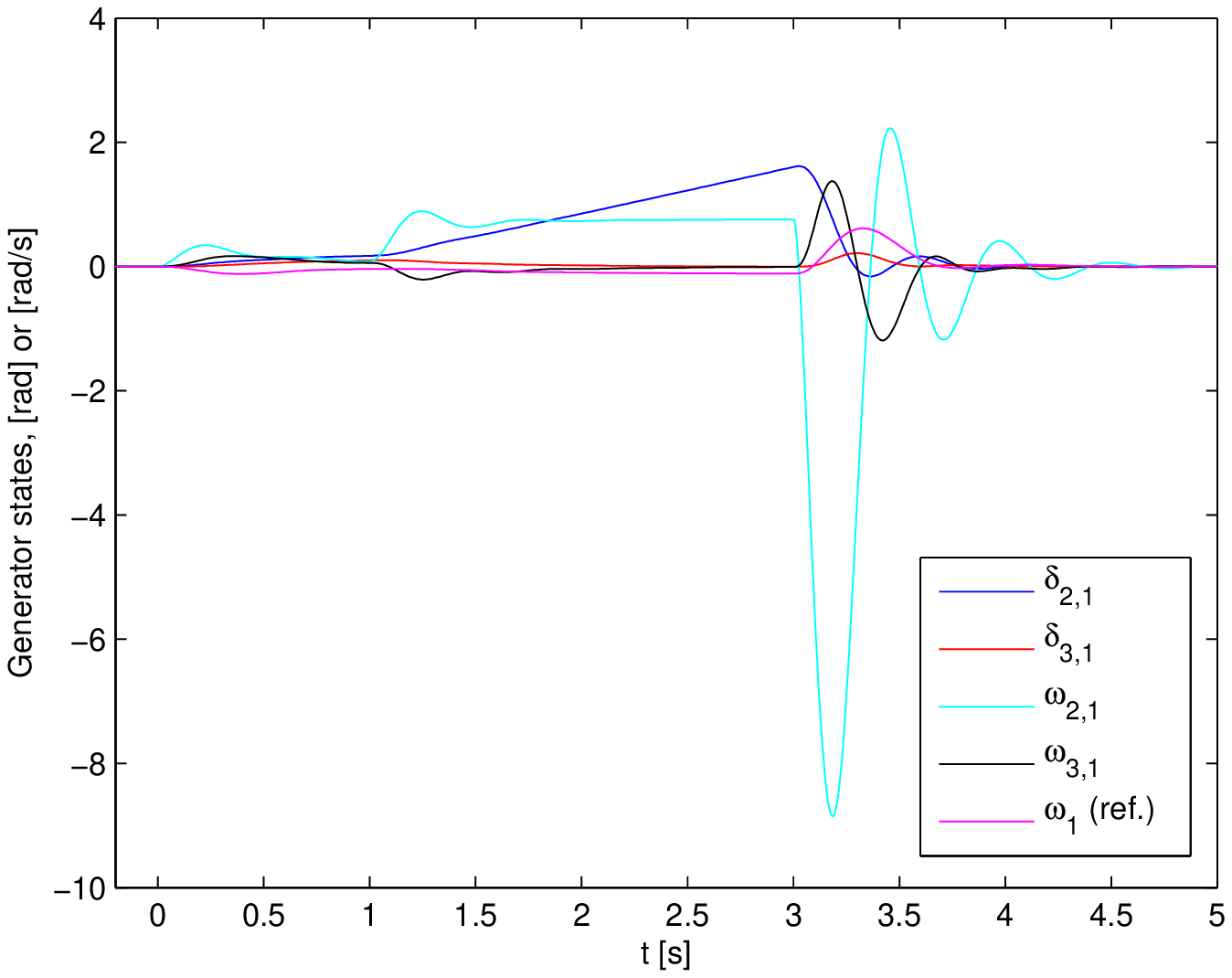}\label{F:prestates2}
}\hspace{0.01in}
\subfigure[Subsystem Lyapunov functions]{
\includegraphics[width=2.2in]{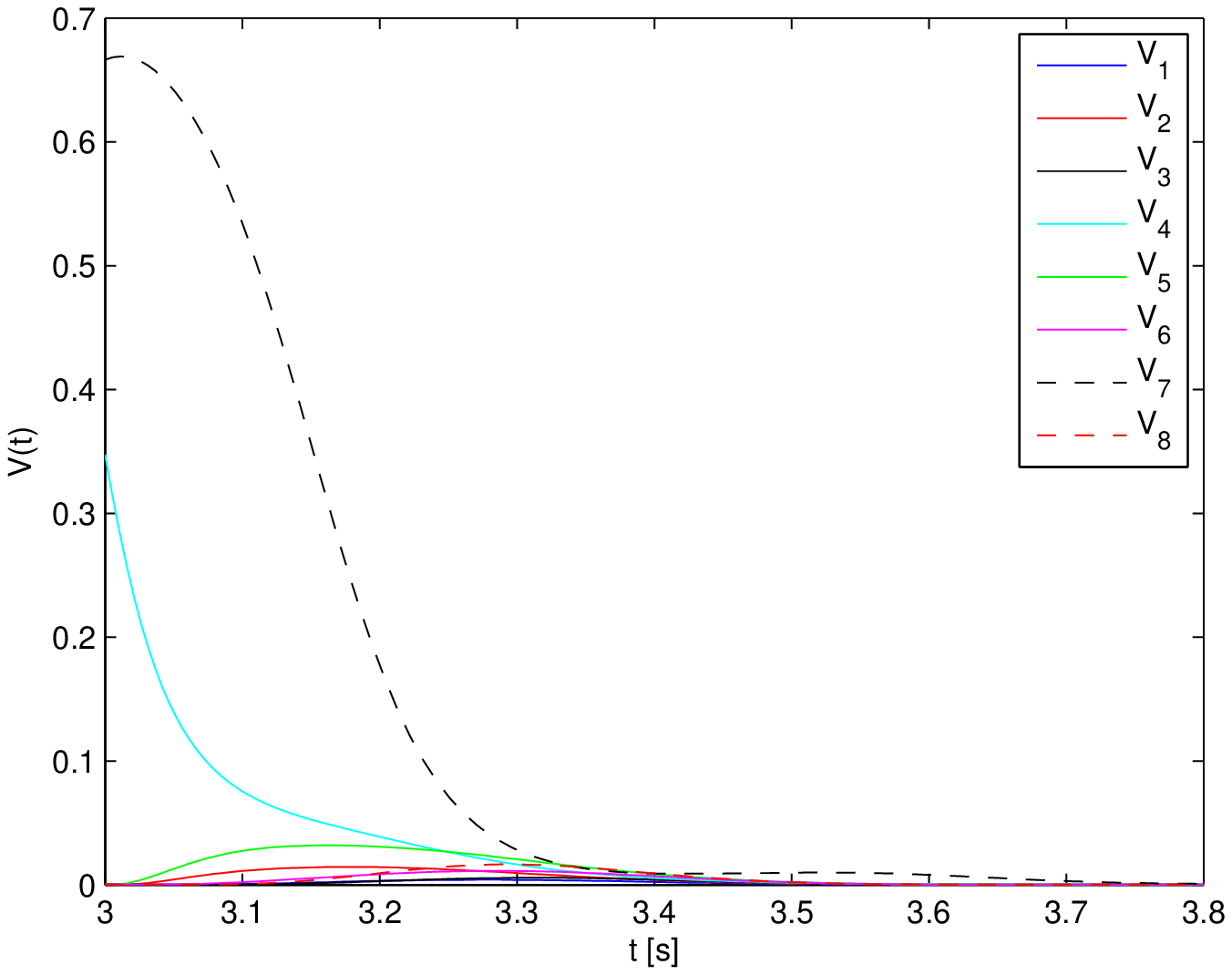}\label{F:preV}
}\caption[Optional caption for list of figures]{System states and Lyapunov functions under a disturbance, before control.}
\label{F:pre_control}
\end{figure*}
\begin{table*}[thpb]
\caption{Algorithm certifies asymptotic stability, under control}
\label{Tab:bounded}
\begin{center}
\begin{tabular}{|c|c|c|c|c|c|c|c|c|}
\hline
$k$ & $\epsilon_1^k$ & $\epsilon_2^k$ & $\epsilon_3^k$ & $\epsilon_4^k$ & $\epsilon_5^k$ & $\epsilon_6^k$ & $\epsilon_7^k$ & $\epsilon_8^k$\\
\hline
0 & 0.0001$^*$  &  0.0007$^*$ &   0.0002$^*$ &   0.3471 &   0.0003$^*$  &  0.0001$^*$  &  0.6663$^*$  &  0.0001$^*$\\
\hline
1 & 0.0000 &   0.0000  &  0.0002 &   0.0312 &   0.0000 &   0.0001 &   0.4451  &  0.0000\\
\hline
2 & 0.0000 &   0.0000 &   0.0001  &  0.0203 &   0.0000 &   0.0000 &   0.4260 &   0.0001\\
\hline
3 & 0.0000 &   0.0000  &  0.0001 &   0.0195  &  0.0000 &   0.0000 &   0.3478 &   0.0000\\
\hline
4 & 0.0000  &  0.0000 &   0.0000  &  0.0154  &  0.0000  &  0.0000 &   0.3383 &   0.0000\\
\hline
\vdots & \vdots & \vdots & \vdots & \vdots & \vdots & \vdots & \vdots & \vdots \\
\hline
11 & 0.0000 &   0.0000 &   0.0000 &   0.0103 &   0.0000 &   0.0000 &   0.2481 &   0.0000\\
\hline
12 & 0.0000 &   0.0000 &   0.0000 &   0.0010 &   0.0000 &   0.0000 &   0.2374 &   0.0000\\
\hline
13 & 0.0000 &   0.0000 &   0.0000 &   0.0001 &   0.0000 &   0.0000 &   0.0492 &   0.0000\\
\hline
14 & 0.0000 &   0.0000 &   0.0000 &   0.0000 &   0.0000 &   0.0000 &   0.0094 &   0.0000\\
\hline
15 & 0.0000 &   0.0000 &   0.0000 &   0.0000 &   0.0000 &   0.0000 &   0.0000 &   0.0000\\
\hline
\end{tabular}
\end{center}
\end{table*}
\begin{figure*}[thpb]
\centering
\subfigure[Load angles, with control at Gen.~2]{
\includegraphics[width=2.2in]{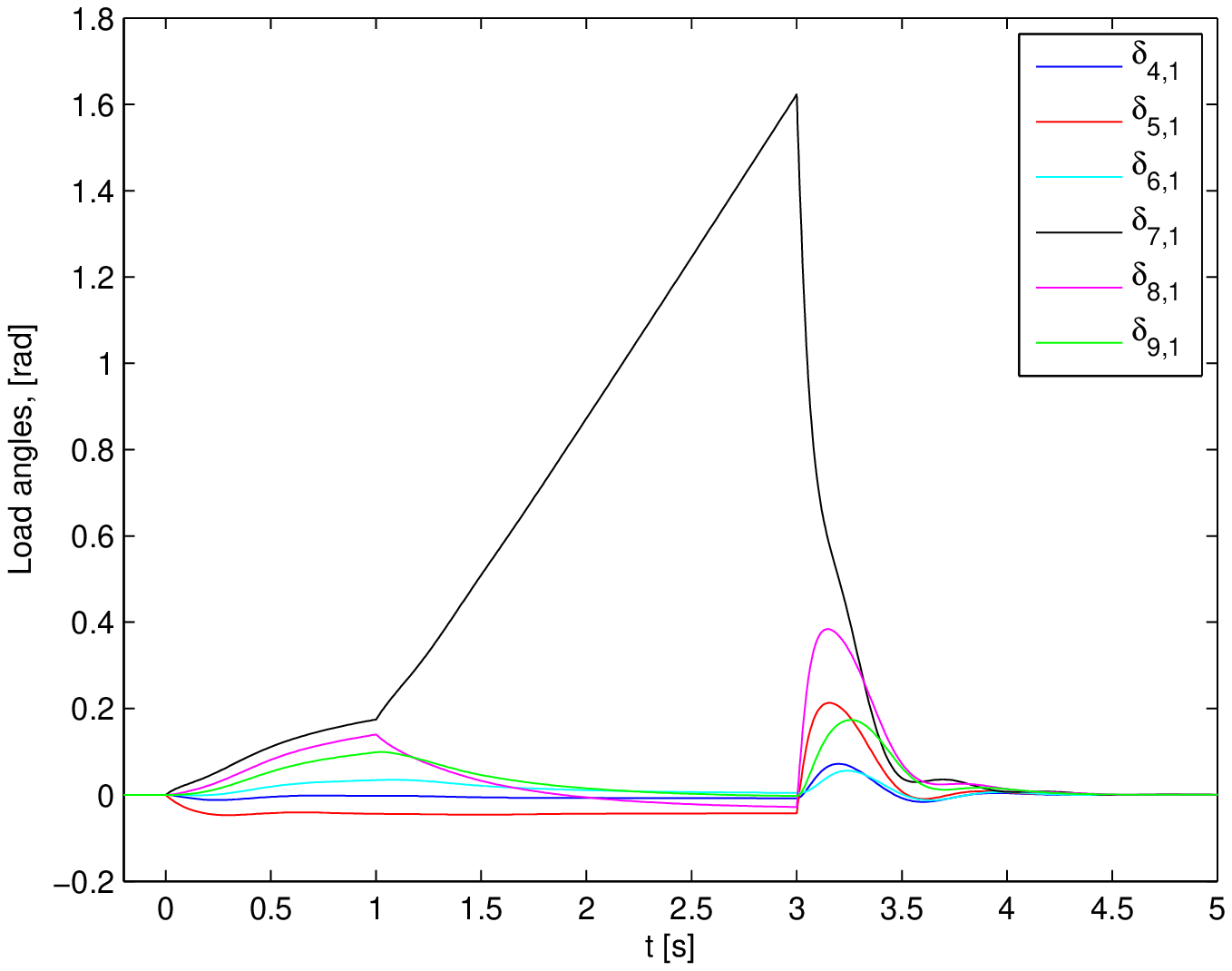}\label{F:poststates}
}\hspace{0.01in}
\subfigure[Generator states, with control at Gen.~2]{
\includegraphics[width=2.2in]{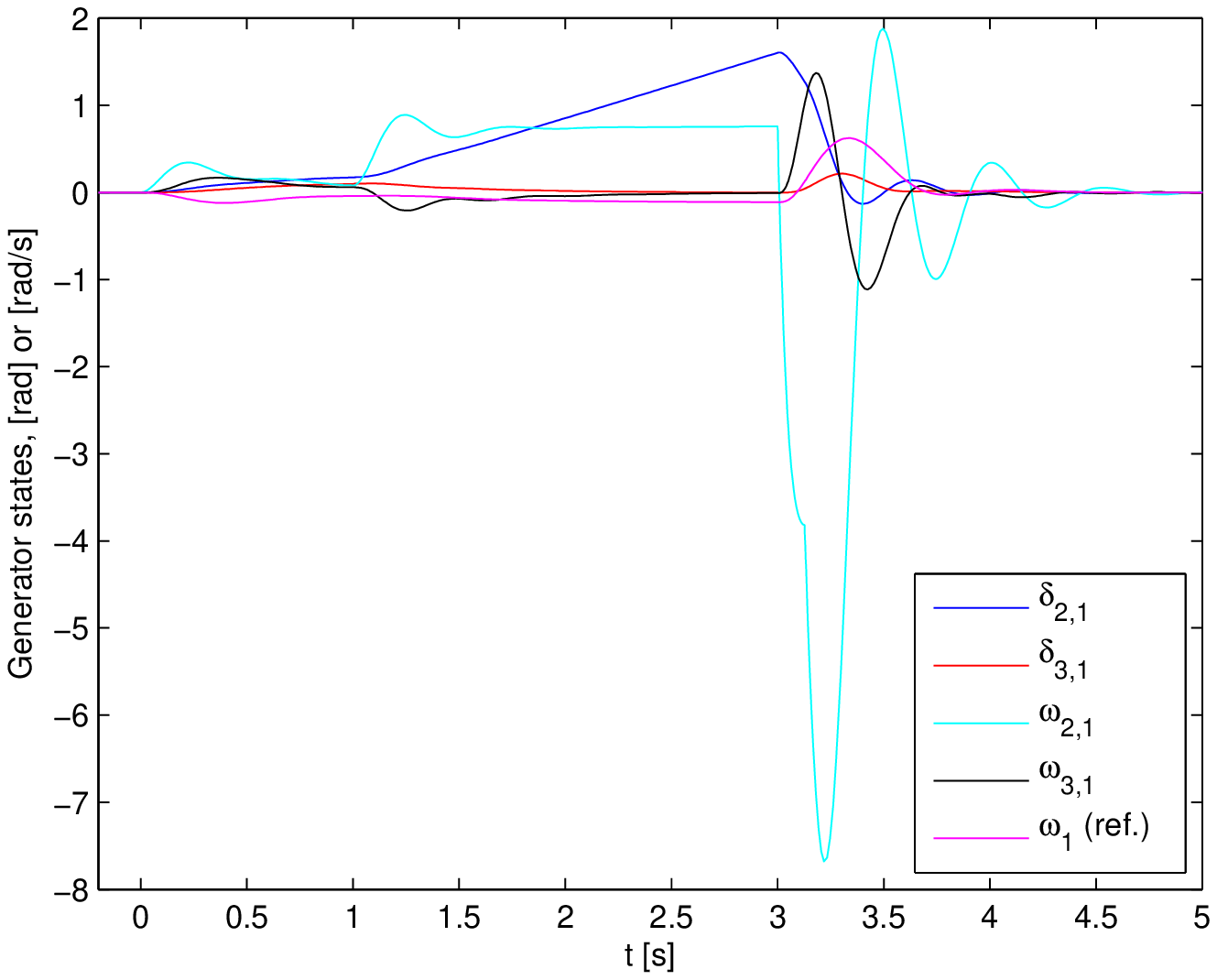}\label{F:poststates2}
}\hspace{0.01in}
\subfigure[Subsystem Lyapunov functions]{
\includegraphics[width=2.2in]{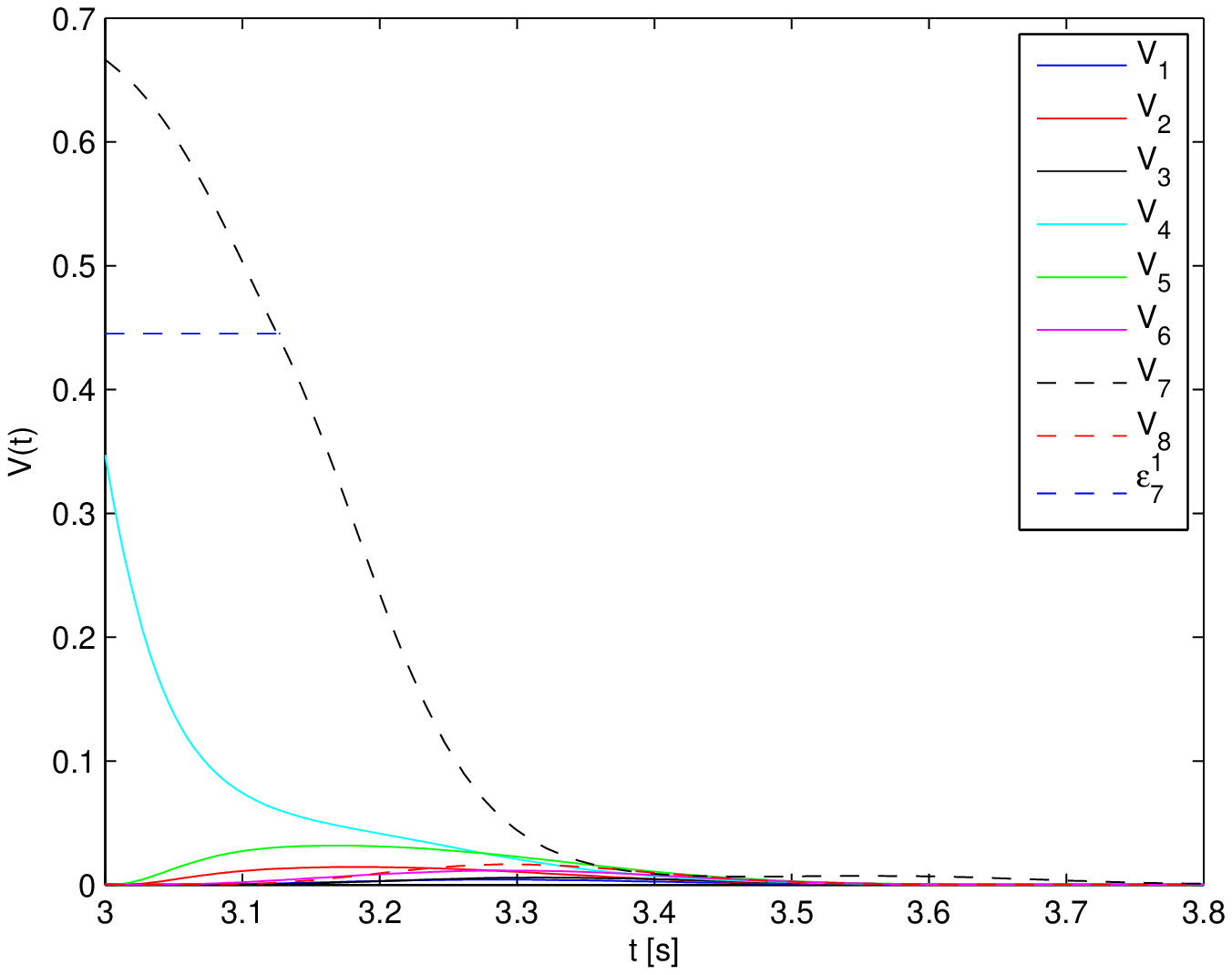}\label{F:postV}
}\caption[Optional caption for list of figures]{System states and Lyapunov functions under local and temporary control applied at generator 2, for the duration $t\in[3s,\,3.127s]$.}
\label{F:post_control}
\end{figure*}
We will be using the Western System Coordinating Council (WSCC) 9-bus system (commonly, the IEEE 9-bus model \cite{Amer:2000}), for our analysis. To better illustrate the scope of our approach, we redistribute the loads so that each of the nodes $4,\,6$ and $9$ in the network has certain loads, as shown in Fig.~\ref{F:net5}. Thus, in our modified network, each node has some associated dynamics. Further, using an overlapping decomposition \cite{Ikeda:1980}, we construct eight subsystems where each subsystem is composed of either a load node or a (non-reference) generator node, along with the reference generator node. Essentially this requires that the reference generator speed value is to be communicated to all the subsystems, in real-time\footnote{Communication bandwidth and time-delays associated with such a model are important issues that are beyond the scope of the present work.}. We choose $D_{L+i}=5M_i$ for the generators and select $D_i$ randomly from $[1,2]$ for the loads.

We use the expanding interior algorithm to find estimates of the ROA for each isolated subsystem. Fig.~\ref{F:roa} shows the evolution of ROA estimates projected on $\omega_n=0$. In Fig.~\ref{F:roa_ss2} we see how the level set of the Lyapunov function for a load node evolves as the ROA estimate, computed as in (\ref{E:ROA})-(\ref{E:ROA2}), grows. Fig.~\ref{F:roa_ss7} shows a comparison of the true ROA for a generator node with a sequence of its estimates where the outermost `blue' contour represents the final estimate\footnote{The ROA estimates are obtained using quadratic polynomials.}.

Any randomly picked initial condition, $x(0)\in\mathcal{R}_A^0$, where $\mathcal{R}_A^0$ is defined in (\ref{E:ROA_isol}), can be mapped into corresponding subsystem Lyapunov function level sets, $\gamma_i^0=V_i(z_i(0)),\forall i$. Then, by choosing $\epsilon_i^0=\gamma_i^0$, we apply the iterative stability analysis algorithm to determine whether or not the domain $\mathcal{D}$ in (\ref{E:D}) is a region of asymptotic stability, and if not, compute the necessary control by (\ref{E:Fi_K}). 

\subsection{Disturbance Analysis}

We assume that initially the network was operating at equilibrium. A disturbance was created by tripping the line between nodes 5-7 for the duration $t\in[0,\,3s]$ and also tripping the line between nodes 7-8 for $t\in[1s,\,3s]$, essentially disconnecting the nodes 7 and 2 from the rest of the network for $t\in[1s,\,3s]$. 
The end of the disturbance provides the initial condition for the stability study. The evolution of states and the Lyapunov functions from this initial condition (i.e., at $t=3s$) is shown in Fig.~\ref{F:pre_control}. The initial condition is asymptotically stable. Further, the local effect of the disturbance is clear from Fig.~\ref{F:preV}, where apart from $V_4(t)$ and $V_7(t)$, all other Lyapunov functions are  very small.

In Table~\ref{Tab:bounded} the results from an application of the stability analysis algorithm to the initial conditions are shown. The row corresponding to $k=0$ lists the initial level sets, while subsequent rows list the sequence $\{\epsilon_i^k\}$ for all the subsystems. The $^*$ shows when the algorithm feels the necessity of applying control\footnote{We chose to seek only linear controllers in the $z$ variables which are applied on the angle dynamics for the loads, and on the speed dynamics for the generators. The controllers are nonlinear in the original state space. } to guarantee the decrease of level set. For example, at the first iteration step, the algorithm prescribes applying state-feedback control
\begin{align}\label{E:control_ss7}
\mathcal{U}_7^0 &= 5.1\cos\delta_{2,1}\!\!-\!38.4\omega_{2,1}\!-\!1.8\omega_1\!-\!71.4\sin\delta_{2,1}\!\!-\!5.1
\end{align}
which is applied to the speed dynamics  of the generator 2 of $S_7$. It can be seen from Fig.~\ref{F:preV} that initially $V_7(t)$ shows a slight increase before starting to decrease, which is why the control is prescribed. For the same reason, control is also suggested for subsystems other than $S_4$. However, other than $S_7$, control action can be safely ignored because the level sets are very close to zero.

After $k=1$, no control is deemed necessary by the algorithm, and finally after iteration $k=15$, the certificate of asymptotic stability is obtained. Fig.~\ref{F:post_control} shows the evolution of states and Lyapunov functions under the action of control applied at the generator 2. The control is applied only for the time during which $0.4451\leq V_7(t)\leq 0.6663$, which amounts to the time interval $t\in[3s,\,3.127s]$, as shown in Fig.~\ref{F:postV}. It is to be noted that under the control action, $\dot{V_7}(t)$ becomes negative at $t=3s$.

}

\section{CONCLUSIONS}\label{S:conclusion}
{In this work, we present a distributed algorithmic approach to infer the stability of an interconnected power system under a given disturbance. When this stability cannot be certified we design local and minimal control laws that guarantee asymptotic stability. The approach presented here is parallel and scalable. While this method is applied here considering dynamic loads in a network preserving power system model, it can be extended to more general, possibly more complex, dynamical representation of the power system network. Future work needs to address the issues of bounded control effort, voltage dynamics of loads and inclusion of available control mechanisms, such as speed governors. }


\appendix

\subsection{Proof of Lemma~\ref{L:asymptotic}}\label{A:proof}

{We note that since $\lim_{k\rightarrow +\infty} \epsilon_i^k= 0,\forall i$, 
\begin{align}\label{E:proof_delta}
\forall \delta\in\left(0,\min_i\epsilon_i^0\right], ~\exists K, ~\text{s.t.}~\epsilon_i^k<\delta~\forall k>K,\forall i.
\end{align}
Let us assume, without any loss of generality, that 
\begin{align}
\exists t_0\geq 0, ~\text{s.t.}~z(t_0)\in\left\lbrace z \in\mathbb{R}^m\left| 
 \bigcap_{i=1}^S  \left\lbrace V_i(z_i)\leq \epsilon_{i}^0 , G_i(z_i) = 0 \right \rbrace   \right.\right\rbrace \, . \notag
\end{align}
Then,
\begin{align}
\forall i, \quad& V_i(t) \leq \epsilon_i^0 + \int_{t_0}^t \dot{V}_i(\tau)d\tau,\quad \forall t\geq t_0 \notag \\
\implies & \exists~\! t_i^1 < t_0 + \left(\epsilon_i^1-\epsilon_i^0\right)/\bar{r}_i^1,\quad \bar{r}_i^1:=\sup_{x\in\mathcal{D}_i^1} \dot{V}_i(x)<0 \notag \\
	& \text{s.t.}~V_i(t)<\epsilon_i^1,~\forall t\geq t_i^1 \, . \notag
\end{align}
Hence, we can argue that,
\begin{align}
V_i(t)\leq\epsilon_i^0,&~\forall t\geq t_0,\forall i\notag\\
\implies &\exists~\! t^1:=\max_i t_i^1,~\text{s.t.}~V_i(t)\leq \epsilon_i^1,\forall t\geq t^1,\forall i \, . \notag
\end{align}
Following similar arguments it is easy to show that,
\begin{align}\label{E:proof_tk}
V_i(t)\leq\epsilon_i^0 &,~\forall t\geq t_0,~\forall i\notag\\
\implies &\forall k, ~\exists~\! t^k\geq t_0,~\text{s.t.}~V_i(t)\leq \epsilon_i^k,~\forall t\geq t^k,~\forall i \, .
\end{align}
Finally combining (\ref{E:proof_delta}) and (\ref{E:proof_tk}) we observe,
\begin{align}
\forall \delta\in\left(0,\min_i\epsilon_i^0\right],~\exists ~\! t^K\geq t_0, ~\text{s.t.}~V_i(t)<\delta,~\forall t\geq t^K,~\forall i \, . \notag
\end{align}
which concludes the proof, because of (\ref{E:cond_asymp}).
}

\bibliographystyle{IEEEtran}
\bibliography{references}

\end{document}